\documentclass[12pt]{amsart}
\usepackage{graphicx}
\usepackage{fullpage}
\usepackage{hyperref}
\usepackage{amsfonts}
\usepackage{amsthm}
\usepackage{amsmath}
\usepackage{mathtools}
\usepackage{float}
\usepackage{overpic}
\usepackage{xcolor}

\newtheorem{theorem}{Theorem}
\newtheorem{claim}{Claim}

\newtheorem{question}{Question}

\theoremstyle{definition}
\newtheorem{definition}{Definition}
\newtheorem{remark}{Remark}

\title{Graphons, Geometry, and Dynamics:\\ Forward and Inverse Perspectives}

\author[\'A. Backhausz]{\'Agnes Backhausz$^{1, 2}$}
\author[C. Kuehn]{Christian Kuehn$^{3, 4, 5}$}
\author[S. van der Niet]{Sjoerd van der Niet$^{1, 2, \ast}$}
\thanks{$^\ast$\href{mailto:van.der.niet.sjoerd@renyi.hu}{van.der.niet.sjoerd@renyi.hu}}

\thanks{The authors acknowledge funding by the European Union’s Horizon Europe Marie Sk\l{}odowska-Curie Actions under the ``BeyondTheEdge: Higher-Order Networks and Dynamics'' project (Grant Agreement No. 101120085).}

\address{$^1$ELTE E\"otv\"os Lor\'and University, Faculty of Science, Institute of Mathematics, Budapest, Hungary}

\address{$^2$HUN-REN Alfr\'ed R\'enyi Institute of Mathematics, Budapest, Hungary}

\address{$^3$Munich Data Science Institute, Garching b.~M\"unchen, Germany}

\address{$^4$Technical University of Munich, School of Computation Information and Technology, Department of Mathematics, Garching b.~M\"unchen, Germany}

\address{$^5$Complexity Science Hub Vienna, Vienna, Austria}
\subjclass[2020]{Primary 47G10; Secondary 58J53, 05C50.}

\date{\today}

\begin{document}

\begin{abstract}
    In this work, we explore the interplay between graph limit theory, the geometry of underlying probability spaces, spectral theory, and network dynamical systems. We investigate two primary questions concerning forward and inverse perspectives: first, whether a graphon retains information about the geometry of the space on which it is defined, and second, whether spectral properties can distinguish graphons that originate from different geometric spaces. To address these questions, we differentiate between combinatorial equivalence and geometric structure, highlighting how these concepts are captured simultaneously by the class of pure graphons. Furthermore, we construct explicit examples of isospectral graphons---graphons whose integral operators share the same spectrum---that differ in their underlying geometry. By utilizing the heat kernels of Neumann- and Dirichlet-isospectral drums, we demonstrate that these graphons are not combinatorially equivalent. Finally, we establish new connections between the geometric aspects of graph limit theory and dynamical systems by analyzing a continuum Kuramoto model with graphon-defined interactions. We demonstrate that while isospectrality implies identical stability properties in certain cases, this correspondence breaks down when the differing boundary conditions of our specific Neumann and Dirichlet constructions are considered.
\end{abstract}

\maketitle

\section{Introduction}
Dynamical systems posed on networks have emerged as a crucial building block of a wide variety of models in applied mathematics and beyond~\cite{Newman,Battistonetal1,Berneretal}. Yet, it is often extremely difficult to tackle finite-dimensional systems with a large number of agents or particles, which represent the nodes of the network. This challenge has led to recent substantial progress in the derivation of mean-field~\cite{ChibaMedvedev,KaliuzhnyiVerbovetskyiMedvedev1,GkogkasKuehnXu1,KuehnXu1,AyiDuteilPoyato,KuehnPulido} and continuum~\cite{Medvedev4,GkogkasKuehnXu,BickSclosa,Throm} limits for network dynamical systems, where recent results show that one can aim to track the graph structure also in the infinite-node limit in many cases. This approach forms a bridge between dynamical systems and graph limit theory~\cite{lovasz2012}. In particular, all graph limit concepts (graphons~\cite{Medvedev3}, graphings~\cite{LackerRamananWu1}, graphops~\cite{KuehnGraphops}, graphexes~\cite{FabianCuiKoeppl}, etc) have been studied in some variants of mean-field limit integro-partial-differential equations that arise from network dynamics. In this work, we focus on graphons but our strategy also applies in principle to other classes of graph limits.
A fundamental feature of the theory is that graphons are defined up to measure-preserving transformations, allowing every graphon to be represented on a canonical probability space such as the unit interval [0,1], i.e., the nodes of the graph become points in the unit interval in the graph limit, while the graphon $W:[0,1]\times[0,1]\rightarrow [0,1]$ encodes the edges. Classical measure theory indicates that the approach of mapping any other domain on which a graph limit might be posed to the unit interval is reasonable from a measure-theoretic viewpoint~\cite{janson2010}. In particular, while this representation is sufficient for capturing combinatorial quantities---such as homomorphism densities and convergence in the cut metric~\cite{borgs2008}---there is a natural and largely unresolved question:
\begin{question}\label{question:inverse}
    Does a graphon retain information about the geometry of the space on which it is defined?    
\end{question}
This `forward-influence' question is crucial from the dynamical systems viewpoint as the original network dynamical system usually come with natural geometries. For example, we can think of animals within a swarm. These agents naturally are placed within an ambient geometry, and this geometry influences the  distance between them as well as how they interact via coupling~\cite{CuckerSmale1,VicsekZafiris}. The same argument applies to a wide variety of situations in biology~\cite{Romanczuketal} and it clearly extends to human social interactions~\cite{ChoMyersLeskovec} as well as to engineering systems such as robots and autonomous cars~\cite{Qu}. Discarding geometric information might be acceptable for some parts of dynamical systems analysis but may not be adequate for other aspects. Hence, we must mathematically investigate, which notions of equivalence are available. We have to clarify, when aspects of the geometry can be discarded or when they can even be recovered from the standard graphon representation on the unit square. In this paper, we aim to communicate the answers we found starting from the motivating question above.

A starting point are graphons arising from kernels defined on metric spaces, i.e., we assume that the domain of $W$ is not the unit interval but $W:X\times X\rightarrow [0,1]$ for some metric space $X$. In this setting, the underlying geometry may influence analytic properties of the integral operator associated to $W$. Motivated by existing dynamical systems calculations in a mean-field/continuum limit setting~\cite{ChibaMedvedevMizuhara,KuehnThrom2,Gkogkasetal,BramburgerHolzer,Dueringetal}, one frequently observes that the spectral behavior of the graphon is one aspect that is being used. However, we are going to recall that classical results in graph limit theory show that any graphon defined on a general probability space is weakly isomorphic to one defined on $[0,1]$, preserving all homomorphism densities and, in particular, the spectrum. Yet, this suggests then the inverse problem, i.e., suppose we would know the spectrum of a graphon, how much does it distinguish about the underlying aspects of the geometry that we discarded. A first indication that geometry leaves a nontrivial imprint comes from the study of spectral decompositions. The Spectral Regularity Lemma ensures that any bounded kernel operator admits an approximation by a truncated spectral expansion with controlled error, reinforcing the central role of eigenvalues and eigenfunctions in understanding graphons~\cite{Szegedy2011}. This leads to a second `inverse-influence' guiding question for our work that is also of crucial dynamical importance:

\begin{question}\label{question:forward}
    Can spectral properties distinguish graphons that originate from different geometric spaces?
\end{question}

Indeed, suppose we would have fully analyzed a dynamical mean-field or continuum limit model using just knowledge about the spectrum of a graphon on $[0,1]$, how much can be actually inferred about the underlying geometric structure of the finite-size realistic particle system. This question is particularly important for design and control problems in engineering applications~\cite{Qu} and could also have major impact on biological applications~\cite{Romanczuketal}.

To address our second key question, we distinguish combinatorial equivalence from geometric structure. Weakly isomorphic graphons are indistinguishable with respect to homomorphism densities. However, an earlier work~\cite{lovasz2010} shows that additional structure can be recovered by equipping the underlying space with a metric induced by the graphon itself. In particular, for a broad class of graphons---so-called pure graphons---weak isomorphism implies a much stronger notion of equivalence: isometry of the induced metric spaces. This result demonstrates that, under strong conditions, the geometry of the underlying space is not lost, but rather encoded intrinsically in the graphon.

Although every graphon is weakly isomorphic to a pure graphon~\cite{lovasz2010}, pure graphons do not cover the space of all graphons. Therefore, we should precisely identify cases, where spectral information is insufficient to re-construct the geometry. This observation naturally connects our setting to a classical problem in spectral geometry, famously phrased by Mark Kac as “Can one hear the shape of a drum?” \cite{kac1966}. In our context, this translates to the question of whether the spectrum of a graphon determines its structure up to weak isomorphism, which is a precise formulation of Question~\ref{question:forward}. We investigate this question by constructing explicit examples of isospectral graphons, i.e., graphons whose associated integral operators share the same spectrum, but at the same time they differ in their underlying geometry. Our analysis proceeds in two stages. First, we show that finite-rank constructions allow for straightforward examples of isospectral graphons defined on distinct spaces, such as spheres of different dimensions. We then turn to a more intricate infinite-dimensional setting inspired by the celebrated construction of isospectral but non-isometric planar domains by Gordon, Webb, and Wolpert. By considering graphons induced by heat kernels on these domains, we obtain examples that are isospectral yet not weakly isomorphic, thereby demonstrating that spectral data alone does not determine the graphon.

This last step then naturally brings us back to our first motivating forward-influence question. Now that we have constructed examples of graphons that are isospectral but not weakly isomorphic, we can check in which parts of a dynamical systems analysis this aspect may influence the outcome. We explore the differences by studying a continuum Kuramoto model with graphon-defined interactions. While isospectrality implies identical stability properties in certain cases, we show that this correspondence breaks down when additional structural features such as boundary conditions and the shape of steady states as well as the full linearization process are taken into account. Indeed, this is quite natural since the graphon operator does not appear alone in the stability analysis, other parts of the equation contribute in a (nonlinear and nontrivial) way so that geometry may not always be discarded. This highlights a subtle interplay between spectral, geometric, and dynamical properties of graphons.

The structure of the paper is as follows. In Section~\ref{section:basic theory} we recall concepts of graph limit theory which will be used in the rest of the paper. Section~\ref{section:inverse influence} addresses Question~\ref{question:inverse} by combining results from earlier works relating graph limit theory to geometry, highlighting notions and statements to identify graphons and their intrinsic geometry. In Section~\ref{section:isospectral graphons} we apply these notions to construct various examples of isospectral graphons---including a pair derived from the heat kernels of two isospectral domains---to address Question~\ref{question:forward}. In Section~\ref{section:isospectral dynamics} we analyze dynamical systems modeled by these specific isospectral graphons. Using classical results, we demonstrate that these kernels generate stability operators that are intrinsically non-isospectral.

\section{Basic Graphon and Measure Theory}\label{section:basic theory}
\subsection{Weakly isomorphic graphons}
The space of graphons are famously introduced by completing the space of graphs~\cite{lovasz2006}. Without specifying the notion of convergence for now, take any converging sequence $(G_n)_n$ in the space of graphs, and complete the space by identifying the limit of the sequence. As a result, the limit can be represented in various forms~\cite[Theorem 11.52]{lovasz2012}. To stay close to the intuitive notion of an adjacency matrix, we first consider a graph $G$ on $n$ vertices, whose adjacency matrix is $A\coloneq A(G)$. We can then represent $G$ as a function $W_G:[0,1]^2\to[0,1]$, defined by
\begin{equation*}
    W_G(x,y)=A(\lceil nx\rceil, \lceil ny\rceil).
\end{equation*}
This suggests that our limit object should be defined on the unit interval as well. However, before showing whether this is justified or not, we consider a \textit{graphon} to be a measurable symmetric function $W:\Omega\times\Omega\to[0,1]$, where $(\Omega,\mathcal A, \mu)$ can be any probability space. 

The motivation for identifying ``similar'' graphs---such as Paley graphs and random graphs with edge density $1/2$---comes from the property that the two graphs, asymptotically, contain exactly the same number of copies of each fixed graph $F$~\cite{borgs2008}. This led to the idea of identifying graphs whose densities of these copies coincide. For a graphon $W:\Omega\times\Omega\to[0,1]$ we define the \textit{homomorphism density} of a simple finite graph $F$ in $W$ by
\begin{equation*}\label{eq:homomorphism density def}
    t(F,W) = \int_{\Omega^{V(F)}} \prod_{ij\in E(F)}W(x_i,x_j)\prod_{i\in V(F)}\,\mathrm d\mu(x_i),
\end{equation*}
where we indeed have $t(F,W_G)=t(F,G)\coloneq n^{-|V(G)|}\textup{hom}(F,G)$. This refers back to the start of the section where we speak about the completion of space of graphs. We say that a sequence of graphons $(W_n)_n$ converges---this includes a sequence of graphs $(G_n)_n$ represented by $(W_{G_n})_n$---if the homomorphism densities $(t(F,W_n))_n$ converge for any simple graph $F$. Uniqueness of the limit object highlights the question on how to represent any graphon from a measure-theoretic viewpoint: if $(\Omega,\mathcal A, \mu)$ is \textit{isomorphic} to another probability space $(\Omega',\mathcal A', \mu')$, i.e., there exists a map $\phi:\Omega\to\Omega'$ which is a bijection between $\mathcal A$ and $\mathcal A'$ which preserves the measure, then the object $W^\phi:\Omega'\times\Omega'\to[0,1]$ defined by $W^\phi(\phi(x),\phi(y))$, has exactly the same homomorphism densities as $W$. We note that this limit object is indeed far from unique. This gives rise to the following notion.
\begin{definition}\label{def:weak-isom}
    Two graphons $W_1:\Omega_1\times\Omega_1\to[0,1]$ and $W_2:\Omega_2\times\Omega_2\to[0,1]$ are called \textit{weakly isomorphic} if for every simple graph $F$, we have $t(F,W_1)=t(F,W_2)$.
\end{definition}
The above definition raises the question whether the two graphons $W_1$ and $W_2$ are weakly isomorphic if and only if there exists an isomorphism $\phi:\Omega_1\to\Omega_2$ between the two probability spaces such that $W_1(x,y)=W_2(\phi(x),\phi(y))$ for almost every $x,y\in\Omega_1$. This is generally not true, as shown by~\cite[Example~2]{borgs2010}. As a matter of fact, the example shows that one cannot even find a measure-preserving map, let alone a bijection.

\subsection{The role of the probability space for graphons}
We now aim to answer the question whether it is justified to consider graphons on $([0,1],\lambda)$, the unit interval with the Lebesgue measure, instead of a space that appears more natural. To formalize this, we must recall the classes of some well-behaved spaces. A measurable space is called a \textit{Borel space} if it is isomorphic to a Borel subset of a Polish space (a complete separable metric space). A \textit{Borel probability space} is simply a Borel space equipped with a probability measure. A \textit{standard probability space} is defined as the completion of a Borel probability space.
The argument that the homomorphism densities are invariant under an isomorphic transformation of the measure space, extends to the notion of probability spaces $(\Omega,\mathcal A, \mu)$ and $(\Omega',\mathcal A', \mu')$ which are \textit{isomorphic up to nullsets}, i.e., we can delete sets of measure $0$ from $\mathcal A$ and $\mathcal A'$ such that the remaining probability spaces are isomorphic. It is a well-known fact that any standard probability space is isomorphic up to nullsets to the disjoint union of the $[0,1]$ interval with the Lebesgue measure and countably many atoms. As a result, any graphon on a standard probability space without atoms is weakly isomorphic to a graphon defined on $([0,1],\lambda)$. For a more detailed discussion, see Appendix~A in~\cite{janson2010}.

\begin{remark}\label{rem:spectrum identical}
    It is important to highlight here, that such an embedding into $([0,1],\lambda)$ does not only leave the homomorphism densities invariant, but various quantities as well, in particular the spectrum associated with a graphon. Indeed, the integral operator $T_W:L^2(\Omega)\to L^2(\Omega)$ defined by
    \begin{equation*}
        T_Wf(x)=\int_\Omega W(x,y)f(y)\,\mathrm d\mu(y),
    \end{equation*}
    has the same spectrum as $T_{W^\phi}:L^2([0,1]
    )\to L^2([0,1])$, and the eigenfunctions are one-to-one related through the pushforward $U_\phi$ of $\phi$, the isomorphism up to nullsets. For example, if $\Omega$ is a Riemannian manifold with finite volume and $\mu$ is the Riemannian volume measure, where $W$ respects the Riemannian metric $g$, then its eigenfunctions will behave nicely with respect to $g$ as well. However, applying $U_\phi$ to any eigenfunction $f$ of $W$, associated with a nonzero eigenvalue, might yield a fractal-like eigenfunction $U_\phi f$ of the transformed graphon $W^\phi$, which itself may also appear fractal-like. This is due to the guaranteed nowhere continuous maps from higher-dimensional spaces to $[0,1]$, erasing the ``visual'' trace of $g$. We aim to study these observations in the following sections.
\end{remark}

Although it might be convenient that such a map exists and ensures that many properties are preserved, including the identical spectrum of the two graphons, in~\cite{janson2010} they prove a somewhat surprising but stronger fact from a combinatorial point of view.
\begin{theorem}[{\cite[Theorem 7.1]{janson2010}}]
    Every graphon on a probability space $(\Omega,\mathcal A,\mu)$ is weakly isomorphic to a graphon on $([0,1],\lambda)$.
\end{theorem}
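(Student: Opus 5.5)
The idea is to reduce an arbitrary probability space to a countably generated one, and then to a standard space, where the isomorphism-up-to-nullsets fact recalled above applies. The difficulty is that $(\Omega,\mathcal A,\mu)$ is completely arbitrary: it need not be countably generated and may have atoms. So we cannot simply transport $W$ by an isomorphism. Instead we construct a measure-preserving map to a nice space and factor $W$ through it.

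\emph{Step 1: factor $W$ through a countably generated $\sigma$-algebra.} Since $W$ is $\mathcal A\otimes\mathcal A$-measurable, a standard monotone-class argument gives a countably generated $\sigma$-algebra $\mathcal A_0\subseteq\mathcal A$ such that $W$ is $\mathcal A_0\otimes\mathcal A_0$-measurable. Indeed, the class of sets in $\mathcal A\otimes\mathcal A$ that lie in $\mathcal B\otimes\mathcal B$ for some countably generated $\mathcal B$ is a $\sigma$-algebra containing the rectangles, and $W$ is determined by the countably many level sets $\{W>q\}$, $q\in\mathbb Q$. Choose generators $A_1,A_2,\dots$ of $\mathcal A_0$ and define $\psi:\Omega\to\{0,1\}^{\mathbb N}$ by $\psi(x)=(\mathbf 1_{A_i}(x))_i$. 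Then $\mathcal A_0=\psi^{-1}(\mathcal B)$, where $\mathcal B$ is the Borel $\sigma$-algebra of the Cantor space. By the Doob--Dynkin lemma applied on the product, there is a Borel function $U$ on $\{0,1\}^{\mathbb N}\times\{0,1\}^{\mathbb N}$ with $W(x,y)=U(\psi(x),\psi(y))$.

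Replacing $U$ by $\min(\max((U+U^\top)/2,0),1)$ keeps the identity on the image, by symmetry of $W$, and makes $U$ a graphon. Let $\nu=\psi_*\mu$. Then $U$ is a graphon on the Borel probability space $(\{0,1\}^{\mathbb N},\mathcal B,\nu)$, and $t(F,W)=t(F,U)$ for every $F$ by the change-of-variables formula for the product map $\psi^{\otimes V(F)}$. This step is the main obstacle: one must be careful that $U$ is determined only on the image of $\psi\times\psi$, which is exactly why the integrals are computed against the pushforward measure.

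\emph{Step 2: remove atoms.} The measure $\nu$ lives on a standard space but may have atoms, so we pass to $(\{0,1\}^{\mathbb N}\times[0,1],\nu\otimes\lambda)$ and set $U'((s,a),(t,b))=U(s,t)$. The projection is measure preserving, so homomorphism densities are unchanged. The new space is a standard probability space without atoms: every point has measure zero, since $\nu\otimes\lambda(\{s\}\times\{a\})=0$.

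\emph{Step 3: transfer to $[0,1]$.} By the fact recalled above, an atomless standard probability space is isomorphic up to nullsets to $([0,1],\lambda)$ via some $\phi$. Transporting $U'$ by $\phi$ gives a graphon on $([0,1],\lambda)$ with the same homomorphism densities, since nullsets do not affect the integrals. Chaining the three equalities of densities proves weak isomorphism.
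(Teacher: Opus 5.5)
Your proof is correct. The paper gives no argument of its own and only cites Janson, and your route is essentially the standard one behind that result: pass to a countably generated sub-$\sigma$-algebra, factor $W$ through the Cantor space by Doob--Dynkin against the pushforward measure, then move to $([0,1],\lambda)$. The one variation is that you remove atoms by tensoring with $([0,1],\lambda)$ and applying the isomorphism theorem. This amounts to building the (a.e.\ defined) measure-preserving map $\pi\circ\phi^{-1}\colon[0,1]\to(\{0,1\}^{\mathbb N},\nu)$, along which the usual argument pulls $U$ back directly.
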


Note that $(\Omega,\mathcal A,\mu)$ is allowed to be a completely generic probability space. Although the existence of the measure-preserving bijection between the two probability spaces is not guaranteed here. However, using the techniques to prove the above theorem, it is not hard to conclude that the spectrum of the two graphons are still identical up to the multiplicity of the eigenvalue $0$.

\subsection{Graphon metrics and quotient space}
With the arguments above, one could argue it that is enough to consider only graphons on $([0,1],\lambda)$ from a combinatorial or measure-theoretic point of view.
This school of thought provides a convenient way to metrize graphons based around the following norm
\begin{equation*}\label{eq:cut norm def}
    \|W\|_\square=\sup_{\|f\|_\infty,\|g\|_\infty\leq 1}\bigg|\int_{[0,1]^2} W(x,y)f(x)g(y)\,\mathrm d x\,\mathrm d y\bigg|,
\end{equation*}
which is called the \textit{cut norm}. Note that we define it for $([0,1],\lambda)$, but all results remain true when considering a general probability space.

Reshuffling the nodes of a graph does not affect the homomorphism densities, so neither should it for a graphon. Any measure-preserving map $\varphi:[0,1]\to[0,1]$ leaves the homomorphism densities invariant, however, it results usually in $\|W-W^\varphi\|_\square\neq 0$, where we define $W^\varphi:[0,1]^2\to[0,1]$ by $W^\varphi(x,y)=W(\varphi(x),\varphi(y))$.
Hence we define the \textit{cut metric} as
\begin{equation*}
    \delta_\square(U,W) = \inf_{\varphi}  \|U-W^\varphi\|_\square,
\end{equation*}
where the infimum is taken over all measure-preserving maps $\varphi:[0,1]\to[0,1]$.
\begin{theorem}[{\cite[Theorem 3.8]{borgs2008}}]\label{thm:conv-equiv}
    Convergence of homomorphism densities is equivalent to convergence in the cut metric.
\end{theorem}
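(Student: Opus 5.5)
The statement has two directions. For each I will work with graphons $W_n$ and $W$ on $([0,1],\lambda)$, which is justified by Theorem~[Janson 7.1].

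\emph{Cut metric implies densities.} This is the counting lemma. For a simple graph $F$ with $m$ edges and graphons $U,W$, I claim
\[
|t(F,U)-t(F,W)|\le m\,\|U-W\|_\square .
\]
To prove it, order the edges $e_1,\dots,e_m$ of $F$. I telescope the difference of the two products into $m$ terms. In the $k$-th term, edges $e_1,\dots,e_{k-1}$ carry $U$, edge $e_k=ij$ carries $U-W$, and the remaining edges carry $W$. Fix all variables other than $x_i,x_j$. The remaining factors then split into a function $f(x_i)$ and a function $g(x_j)$, each bounded by $1$ in absolute value, because $F$ is simple and every other edge meets at most one of $i,j$. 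The inner integral is therefore at most $\|U-W\|_\square$ by definition of the cut norm, and integrating over the fixed variables preserves this bound. Since $t(F,W^\varphi)=t(F,W)$ for every measure-preserving $\varphi$, taking the infimum over $\varphi$ gives
\[
|t(F,U)-t(F,W)|\le m\,\delta_\square(U,W).
\]

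\emph{Densities imply cut metric.} I will combine compactness with uniqueness.
\begin{enumerate}
\item By the Lov\'asz--Szegedy compactness theorem, proved via the weak regularity lemma and a martingale argument, the space of graphons modulo $\delta_\square=0$ is compact.
\item Suppose $t(F,W_n)\to t(F,W)$ for all $F$ but $\delta_\square(W_n,W)\not\to0$. Pass to a subsequence with $\delta_\square(W_n,W)\ge\varepsilon$. By compactness, a further subsequence converges in $\delta_\square$ to some $U$.
\item By the first direction, $t(F,U)=t(F,W)$ for all $F$.
\item It therefore suffices to show that $t(F,U)=t(F,W)$ for all $F$ implies $\delta_\square(U,W)=0$. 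Then $\delta_\square(W_n,W)\to\delta_\square(U,W)=0$ along the subsequence, a contradiction.
\end{enumerate}

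\emph{The uniqueness step is the main obstacle.} My plan is the sampling (inverse counting) argument. Let $\mathbb G(k,W)$ be the $W$-random graph on $k$ vertices. Inclusion--exclusion shows that its distribution is determined by the densities $t(F,W)$ with $|V(F)|=k$. Hence $\mathbb G(k,U)$ and $\mathbb G(k,W)$ have the same distribution. The second sampling lemma, a concentration argument with a first-moment bound on cut distance, gives
\[
\delta_\square\bigl(W_{\mathbb G(k,W)},W\bigr)\le \frac{C}{\sqrt{\log k}}
\]
with high probability, and likewise for $U$. Since the two random graphs can be coupled to be identical, the triangle inequality yields $\delta_\square(U,W)\le 2C/\sqrt{\log k}\to0$.

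Proving this sampling bound is the hardest technical part. It needs the weak regularity lemma to reduce to step functions with about $2^{O(1/\varepsilon^2)}$ steps, followed by a martingale (Azuma) concentration argument for the sampled cut norm. Alternatively, one can cite~\cite{borgs2008} for both the compactness and the sampling lemma.
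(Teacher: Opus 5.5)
Your argument is correct. The paper gives no proof of its own and simply cites \cite{borgs2008}. Your counting-lemma direction is the standard one used there.

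For the converse you take a different, qualitative route: compactness of $(\widetilde{\mathcal W},\delta_\square)$ plus injectivity of $W\mapsto (t(F,W))_F$ on $\widetilde{\mathcal W}$. The cited source instead proves a quantitative inverse counting lemma: if $|t(F,U)-t(F,W)|\le 3^{-k^2}$ for every simple graph $F$ on $k$ vertices, then $\delta_\square(U,W)=O(1/\sqrt{\log k})$. Its proof is your uniqueness step run with approximate rather than exact equality. Inclusion--exclusion bounds the total variation distance between $\mathbb G(k,U)$ and $\mathbb G(k,W)$ by at most $(2/3)^{k^2}$. The two samples can therefore be coupled to coincide with high probability, and the second sampling lemma finishes exactly as in your sketch.

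Consequently, once the sampling lemma is available, compactness is superfluous for the version with a prescribed limit $W$. The direct route also gives an explicit rate, and shows that convergent densities force a $\delta_\square$-Cauchy sequence.

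What compactness buys you is existence of the limit. Under the paper's definition, convergence of densities does not presuppose a limit graphon. Your framework handles this with one line you should add: take a subsequential $\delta_\square$-limit $U$, note that $t(F,U)=\lim_n t(F,W_n)$ by the counting lemma, and run your argument with $W=U$. On the quantitative route, this existence input must come from elsewhere, namely the Lov\'asz--Szegedy limit theorem or completeness of $(\widetilde{\mathcal W},\delta_\square)$.
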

So two graphons are weakly isomorphic if and only if their cut distance is zero.
If $\mathcal W$ is the space of all symmetric measurable functions $W:[0,1]^2\to[0,1]$ (graphons), then $\widetilde{\mathcal{W}}\coloneqq\mathcal W / \sim$ is the (quotient) space of graphons where $U\sim W$ if $U$ and $W$ are weakly isomorphic.
Moreover, $(\widetilde{\mathcal{W}},\delta_\square)$ is compact and the set of graphs is indeed dense in $(\widetilde{\mathcal{W}},\delta_\square)$~\cite{lovasz2012}.

We conclude this section with the following. We have a graphon $W$ defined on a general probability space. We can consider a weakly isomorphic representation $\widetilde W$ on $[0,1]$, which we can approximate by a sequence of finite graphs $(G_n)$, in the sense that $t(F,G_n)\to t(F,\widetilde W)=t(F,W)$ for any $F$. So they represent the same limit object in a combinatorial sense. If one restricts slightly further to an atomless standard probability space, the resulting $W^\phi$ is also identical in a measure-theoretic framework. So from both a combinatorial and measure-theoretic point of view, any such representation seems reasonable, as hinted in Remark~\ref{rem:spectrum identical}. In the following sections, we identify specific situations where this choice requires extra attention.

%\section{Spectral Observations and inverse-Influence}\label{section:spectral observations}
%[REVISE THESE COMMENTS LATER: Is this section really necessary?]

%These questions have partly been addressed in~\cite{Szegedy2011}. It build on the following adaptation of the famous Regularity Lemma by Szemer\'edi.
%\begin{theorem}[Spectral Regularity Lemma]
%     For an arbitrarily decreasing function $F:\mathbb R^+\times\mathbb R^+\to\mathbb R^+$ and every $\varepsilon>0$ there is a constant $\delta > 0$ such that for every self adjoint kernel operator $M:\Omega\times\Omega\to\mathbb C$ with $\|M\|_\infty\leq 1$ on a separable probability space $(\Omega,\mu)$ there is a real number $\lambda\geq\delta$ such that $M$ has a decomposition $M= S+E+R$ with the following properties
%     \begin{enumerate}
%         \item $S=\sum_{i:|\lambda_i|\geq \lambda}\lambda_i f_if_i^*$
%         \item $\|E\|_2\leq\varepsilon$
%         \item $\|R\|_\square\leq F(\lambda,\varepsilon)$
%         \item $\|S+E\|_\infty\leq 1$
%         \item $E$ and $R$ are self adjoint,
%     \end{enumerate}
%     where the $\{f_i\}$ are the eigenfunctions of $M$ associated with eigenvalues $\{\lambda_i\}$ and form an orthonormal basis in $L^2$.
% \end{theorem}
% It roughly says: we can approximate any graphon $W$ with its truncated spectral expansion $S$, while ensuring the $L^2$-error $E$ and the combinatorial error $R$ are kept arbitrarily small. The proof of this theorem already highlights one of the hurdles we wished to identify. They assume without loss of generality that $\Omega$ is some standard probability space.

\section{Geometric Structure vs. Combinatorial Equi\-valence and inverse-Influence}\label{section:inverse influence}
In this section we wish to address Question~\ref{question:inverse}. We do this by recalling various notions from~\cite{lovasz2010} on how to obtain  graphons, so called ``pure graphons'', which are weakly isomorphic and such that their underlying probability spaces are isomorphic, identifying the intrinsic geometry in some sense. But first we provide some background on where the underlying geometry might leave a trace in the spectral expansion of a graphon.

Question~\ref{question:inverse} was already studied in~\cite{Szegedy2011} by means of spectral decomposition. In particular, their Spectral Regularity Lemma says that any bounded self-adjoint kernel operator $M:V\times V\to\mathbb C$ (this includes graphons) can be decomposed as $M= S+E+R$, where $S$ is a finite-rank truncated spectral expansion of $M$. The kernels $E$ and $R$ are remainder terms, where $E$ controls the $L^2$-error made by the spectral expansion and $R$ the ``combinatorial error'' in the cut norm. This provides a tool to study graphons, which can be analyzed easily via their dominant eigenspaces, a quantity which is known to carry information on the underlying space as shown below.

The following example arises from a clever application of the Spectral Regularity Lemma to graphons which are invariant under a group of unitary operations. The representation theory of such group actions allows the authors to analyze certain sets of graphons in a specific way, which comes down to analyzing the dimensions of the eigenspaces of the graphon induced by the group action. To be specific, they define $\mathcal S_n^0$ to be the space of graphons on the sphere $\mathbb S^n=\{x\in\mathbb R^{n+1}: \|x\|_2=1\}$ with the uniform measure, which is invariant under the induced action of the orthogonal group $O_{n+1}$, i.e., $W\in \mathcal S_n^0$ if $W(x,y)=f(\langle x,y\rangle)$ for some function $f:[-1,1]\to[0,1]$.

Using the representation theory of $O_{n+1}$ (the dimensions of the spherical harmonics to be precise), they show that $\mathcal S_n^0$ is compact for $n\geq 2$, whereas $\mathcal S_1^0$ is not. 
As a matter of fact, $\mathcal S_1^0$ is not even closed with respect to the cut metric.
It is possible to construct a sequence of graphons in $\mathcal S_1^0$ whose underlying topology is not $\mathbb S^1$.

This leads us to the question what the underlying topology of a graphon is. To emphasize that graphons are defined on different spaces, we denote a graphon $W:\Omega\times\Omega\to[0,1]$ by $(\Omega,W)$. 
Consider a graphon $(\mathbb S^2,W)\in \mathcal S_2^0$.
We have established that there exists a weakly isomorphic graphon $(\mathbb S^1,W^\varphi)$ via some measure-preserving map $\varphi:\mathbb S^1\to \mathbb S^2$ (which can even be a Borel isomorphism). This raises the question whether it is even necessary to consider the underlying topology of a graphon.

Again consider two graphons $(\mathbb S^1,W_1)\in\mathcal S_1^0$ and $(\mathbb S^2,W_2)\in\mathcal S_2^0$.
In~\cite{lovasz2010} they introduce the following metric $r_W$ on the underlying space $\Omega$ of a graphon $(\Omega, W)$ as
\begin{equation*}
    r_W(x,y) = \int_\Omega |W(x,z)-W(y,z)|\,\mathrm d\mu(z).
\end{equation*}
They note that, specifically for $(\mathbb S^1,W_1)$ and $(\mathbb S^2,W_2)$, the identity map $\iota: (\mathbb S^i, r_{W_i})\to(\mathbb S^i,r_i)$ is a homeomorphism for $i=1,2$, where $r_i$ is the geodesic distance on $\mathbb S^i$. Obviously, the circle and the sphere are not homeomorphic, and in~\cite{lovasz2010} they establish a strong implication for weakly isomorphic graphons satisfying a specific set of conditions, which helps to identify the underlying topology.
\begin{definition}
    A graphon $(\Omega,W)$ is called \textit{pure} if $(\Omega,r_{W})$ is a complete separable metric space and the probability measure $(\Omega,\mu)$ has full support (i.e., every open set has positive measure).
\end{definition}
A meaningful observation is that this definition forbids so called \textit{twin points}, i.e., there are no $x,y\in\Omega$ such that $W(x,\cdot)$ and $W(y,\cdot)$ are equal almost everywhere. A graphon with no twin points is called \textit{twin-free}. We also need the following definition.
\begin{definition}
    Two graphons $(\Omega,W)$ and $(\Omega',W')$ are called \textit{isometric} if there exists an isometric bijection $\phi: (\Omega,r_W)\to(\Omega',r_{W'})$ that is measure-preserving, and $W'(\phi(x), \phi(y)) = W (x, y)$ for almost all $x,y\in \Omega$.
\end{definition}
If such a map exists, then the two graphons are obviously weakly isomorphic, as it implies $\delta_\square(W,W')=0$ (Definition~\ref{def:weak-isom} and Theorem~\ref{thm:conv-equiv}). We get the following statement from~\cite{lovasz2010}.
\begin{theorem}\label{thm:isomorphic graphons}
    If two pure graphons are weakly isomorphic, then they are isometric.
\end{theorem}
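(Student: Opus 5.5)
The plan is to follow the strategy of Lov\'asz and Szegedy: pass through a canonical object that is determined purely by the homomorphism densities, and show that each pure graphon is isometric to it.

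\textbf{Step 1: a coupling.} Since $W$ and $W'$ are weakly isomorphic, Theorem~\ref{thm:conv-equiv} gives $\delta_\square(W,W')=0$. By the compactness of $(\widetilde{\mathcal W},\delta_\square)$, or directly by the known fact that the infimum in the cut distance over couplings is attained, there is a coupling measure $\gamma$ on $\Omega\times\Omega'$ with marginals $\mu$ and $\mu'$ such that
\begin{equation*}
W(x,y)=W'(x',y')
\end{equation*}
for $\gamma\otimes\gamma$-almost every $((x,x'),(y,y'))$. I expect this to be the first real obstacle, since attainment of the infimum needs a limiting argument over couplings (weak-$*$ compactness, using that $\Omega,\Omega'$ are standard Borel spaces because they are Polish under $r_W$, $r_{W'}$) plus lower semicontinuity of the cut-norm discrepancy.

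\textbf{Step 2: $\gamma$ is carried by the graph of an isometry.} For $\gamma$-almost every pair $(x,x')$, the coupling identity gives $W(x,\cdot)$ and $W'(x',\cdot)$ ``equal through $\gamma$''. Hence for two such pairs $(x,x')$ and $(y,y')$,
\begin{equation*}
r_W(x,y)=\int|W(x,z)-W(y,z)|\,\mathrm d\gamma(z,z')=r_{W'}(x',y').
\end{equation*}
Taking $y=x$ shows that $x$ determines $x'$ up to $r_{W'}$-distance $0$. Twin-freeness, which follows from purity, then says that $x$ determines $x'$ uniquely. So on a $\gamma$-full set the support is the graph of a map $\phi$ that preserves distances.

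\textbf{Step 3: extension and surjectivity.} Full support of $\mu$ makes the domain of $\phi$ dense in $(\Omega,r_W)$. Completeness of $(\Omega',r_{W'})$ lets the isometry extend to all of $\Omega$. By symmetry the inverse correspondence extends as well, so $\phi$ is an isometric bijection. The push-forward of $\mu$ under $\phi$ equals $\mu'$ because $\gamma$ is concentrated on the graph and has marginals $\mu$ and $\mu'$. Finally, $W'(\phi(x),\phi(y))=W(x,y)$ almost everywhere follows from the coupling identity.

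The delicate point of Step 3 is the careful measure-theoretic handling of null sets when passing from ``$\gamma$-a.e.\ pairs'' to a pointwise map on a dense set. I would handle it by working with the support of $\gamma$, which is closed, and using continuity of $x\mapsto W(x,\cdot)\in L^1$ with respect to $r_W$.
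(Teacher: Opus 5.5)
Your proposal is correct and is essentially the Lov\'asz--Szegedy argument. The paper gives no proof of its own and simply cites \cite{lovasz2010}; that argument takes a coupling (common pullback) realizing $\delta_\square=0$ exactly, uses twin-freeness and full support to see that it is carried by the graph of a distance-preserving map on a dense set, and extends this map by completeness, just as you do. One caveat: Step~1 should rest on the known theorem that the coupling minimum is attained (Borgs--Chayes--Lov\'asz, Janson), not on compactness of $(\widetilde{\mathcal W},\delta_\square)$, which does not by itself produce such a coupling.
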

We interpret this result by coming back to the case of $(\mathbb S^1,W_1)\in\mathcal S_1^0$ and $(\mathbb S^2,W_2)\in\mathcal S_2^0$. Since the two metric spaces $(\mathbb S^1, r_{W_1})$ and $(\mathbb S^2, r_{W_2})$ are homeomorphic to the circle and the sphere with geodesic distances respectively, and these two objects are not homeomorphic, there exists no isometry between $(\mathbb S^1, r_{W_1})$ and $(\mathbb S^2, r_{W_2})$. As a result, $(\mathbb S^1,W_1)$ and $(\mathbb S^2,W_2)$ can never represent the same object (i.e., be weakly isomorphic), due to their constructions intrinsically depending on the geometry of the underlying spheres.

This exposes the subtle interplay between geometric structure and combinatorial equivalence. While the intrinsic geometry of the underlying spaces can prevent two graphons from being weakly isomorphic (combinatorially equivalent), this does not forbid a representation on a canonical domain such as $[0,1]$. Indeed, no matter how the graphon $W:\Omega\times\Omega\to[0,1]$ depends on the geometry of the space $(\Omega,d)$, we can always represent it by a graphon $W^\phi:[0,1]^2\to[0,1]$ which is weakly isomorphic to $W$. Moreover, $W^\phi$ encodes the same properties, including, but not limited to, the spectrum, the degree function, the $L^p$ and cut norms, and all subgraph densities.

As a matter of fact, even the metric spaces $(\Omega,r_W)$ and $([0,1],r_{W^\phi})$ are isometric. No matter how one flattens the graphon, it will always encode the underlying geometry. However, it is not guaranteed that $W$ admits a particularly nice representation in terms of $(\Omega,d)$.

\section{Isospectral graphons}\label{section:isospectral graphons}
This interplay between geometry and the structural and analytical properties of a graphon suggests that we can link the famous question phrased by Mark Kac: ``Can one hear the shape of a drum?''. This is exactly Question~\ref{question:forward}, phrased in a less poetic way as, can two non-weakly isomorphic graphons $W_1$ and $W_2$ be \textit{isospectral}, i.e., do the integral operators $T_{W_i}(f)=\int_{\Omega_i} W_i(\cdot,y)f(y)\,\mathrm d\mu(y)$ have the same spectral measure for $i=1,2$? We first demonstrate how this question is easily answered for \textit{finite-rank} graphons (having only a finite number of nonzero eigenvalues), and how the solution to Mark Kac's problem provides an answer when considering the \textit{infinite-rank} case.

We can construct isospectral graphons by conveniently selecting a superposition of functions which decompose easily in for example the harmonics of a sphere $\mathbb S^n$. We first consider $(\mathbb S^1, W_{\mathbb S^1})$ on the circle, for which we know that $L^2(\mathbb S^1)$ (with the uniform probability measure) has the usual basis given by $\cos(n\cdot)$ and $\sin(n\cdot)$ for $n\geq 0$. As a result, the graphon
\begin{equation*}
    W_{\mathbb S^1}(x,y)= \frac{1}{2}+\frac{1}{8}\cos(d_{\mathbb S^1}(x,y)) + \frac{1}{8}\cos(2d_{\mathbb S^1}(x,y)),
\end{equation*}
has the nonzero spectrum $\{1/2,1/16\}$, where $1/2$ has multiplicity 1 and $1/16$ has multiplicity 4. Note that $\cos(d_{\mathbb S^1}(x,y))=\langle x,y\rangle$ for points on the unit circle in $\mathbb R^2$.

Similarly, we can construct $(\mathbb S^3, W_{\mathbb S^3})$ on the sphere in $\mathbb R^4$ by
\begin{equation*}
    W_{\mathbb S^3}(x,y)= \frac{1}{2}+\frac{1}{4}\cos(d_{\mathbb S^3}(x,y)),
\end{equation*}
where $d_{\mathbb S^3}$ is the geodesic distance on $\mathbb S^3$. Just as with the circle, this means that $W_{\mathbb S^3}(x,y)=\frac{1}{2}+\frac{1}{4}\langle x,y\rangle$. With this expression it easily follows that $W_{\mathbb S^3}$ has the same spectrum as $W_{\mathbb S^1}$. Since both graphons are of the form $W(x,y)=F(d(x,y))$ for some continuous $F:[0,\pi]\to[0,1]$, we get that $(\mathbb S^1,r_{W_{\mathbb S^1}})$ is homeomorphic to $(\mathbb S^1,d_{\mathbb S^1})$. Similarly, $(\mathbb S^3,r_{W_{\mathbb S^3}})$ is homeomorphic to $(\mathbb S^3,d_{\mathbb S^3})$. As a result, since these are pure graphons, they cannot be weakly isomorphic due to Theorem~\ref{thm:isomorphic graphons}.

As demonstrated here, it is easy to force two clearly distinct graphons to be isospectral, as long as they are of finite-rank: the range of $T_W$ is a finite dimensional subspace of $L^2(\Omega)$. If we require $T_W$ to be of infinite-rank, the original negative answer to the question posed by Mark Kac provides an interesting counterexample for us as well. In~\cite{GWW}, the authors construct two domains (drums) $\Omega_1$ and $\Omega_2$ that have identical spectra for the Laplacian operator on the drums, under both Dirichlet and Neumann boundary conditions. In the following we show that these drums give rise to isospectral graphons, which are not weakly isomorphic.
Our specific construction utilizes heat kernels on the drums, allowing us to apply the established literature to the graphons. In turn, this provides additional insight into the spectral properties inherited from the geometry of the domains.

\subsection{The heat kernel on isospectral drums}
Let $\Omega_1,\Omega_2\subset \mathbb R^2$ denote the two (compact) domains constructed by Gordon--Webb--Wolpert in~\cite{GWW}, and equip them with the uniform probability measures $\mu_1$ and $\mu_2$ (induced by the normalized Lebesgue measure). The Dirichlet and Neumann Laplace operators on these domains are proven to be isospectral, i.e., they share the same set of eigenvalues $0\leq\lambda_0\leq\lambda_1\leq\ldots$, with $\lambda_i\to\infty$. We shall refer to these domains as \textit{Dirichlet-isospectral drums} and \textit{Neumann-isospectral drums}, respectively, depending on the boundary conditions applied to the Laplacian.

Because the drums are isospectral, the eigenvalues of the integral operators $T_{H_i}$ given by the heat kernels $H_i(\cdot,\cdot,t)=e^{-\Delta_i t}$, coincide and are given by $\{e^{-\lambda_n t}\}_{n\geq 0}$. Hence, for any $t_0> 0$ we can normalize the kernels by the same value such that both take values in $[0,1]$, i.e., we define the graphons $(\Omega_i,W_i)$ by
\begin{equation*}
    W_i(x,y) = \frac{1}{K} H_i(x,y,t_0),
\end{equation*}
where $K=\max_{i,x,y}H_i(x,y,t_0)$, which gives the spectra $\{\frac{1}{K}e^{-\lambda_n t_0}\}_{n\geq 0}$. We allow for $t_0>0$ to be chosen accordingly, e.g., in Section~\ref{section:Dirich isospec dynam}.

We can show that these two graphons are not isomorphic, by showing they are not isometric. By Theorem~\ref{thm:isomorphic graphons}, this would be true for pure graphons. If both kernels are twin-free---meaning $W_i(x,\cdot)$ and $W_i(y,\cdot)$ differ on a set of positive measure for all distinct $x,y$---then this fact is obvious. In order to avoid twin points on the boundary, we consider the Neumann-isospectral drums for now, but with some care, as described in Remark~\ref{rem:dirch weakly-isom}, we can consider the Dirichlet-isospectral drums as well.
\begin{claim}\label{claim:Neu twin free}
    The graphons $(\Omega_i,W_i)$ are twin-free.
\end{claim}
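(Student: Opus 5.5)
The plan is to exploit the spectral (Mercer) expansion of the Neumann heat kernel. Let $\{\varphi_n\}_{n\geq0}$ be an $L^2(\Omega_i,\mu_i)$-orthonormal basis of Neumann eigenfunctions of $\Delta_i$, with eigenvalues $\lambda_n$. Then
\begin{equation*}
    W_i(x,y)=\frac1K\sum_{n\geq0}e^{-\lambda_n t_0}\varphi_n(x)\varphi_n(y),
\end{equation*}
and the series converges absolutely and uniformly on $\Omega_i\times\Omega_i$. This uses Weyl's law together with the bound $\|\varphi_n\|_\infty\lesssim \lambda_n^{c}$, or simply the continuity of the heat kernel up to the boundary for these polygonal, piecewise smooth domains.

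Fix $x\neq y$ in $\Omega_i$, including boundary points. Suppose, aiming for a contradiction, that $W_i(x,\cdot)=W_i(y,\cdot)$ almost everywhere. Take the inner product of both sides with $\varphi_n$ in $L^2$. Orthonormality then gives
\begin{equation*}
    e^{-\lambda_n t_0}\bigl(\varphi_n(x)-\varphi_n(y)\bigr)=0 \quad\text{for all } n,
\end{equation*}
and hence $\varphi_n(x)=\varphi_n(y)$ for all $n$. Here pointwise values make sense because the eigenfunctions are continuous on the closure of the domain. The same reasoning shows that $W_i(x,\cdot)$ is continuous, so an almost-everywhere equality is in fact an everywhere equality.

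It then remains to show that the Neumann eigenfunctions separate points of $\overline{\Omega_i}$. I would do this by a density argument. Every smooth function $g$ on $\overline{\Omega_i}$ can be approximated in $L^2$ by the partial sums $\sum_n\langle g,\varphi_n\rangle\varphi_n$. This is not directly pointwise, so I would apply the heat semigroup instead. For each $s>0$, the function $e^{-\Delta_i s}g$ lies in the closed span where every element takes equal values at $x$ and $y$, because its expansion converges uniformly. Letting $s\to0$, we have $e^{-\Delta_i s}g\to g$ pointwise on the closure, which holds at least for $g$ compactly supported in the interior, or uniformly for continuous $g$ by Feller-type properties of reflected Brownian motion. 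This yields $g(x)=g(y)$ for all such $g$, so $x=y$, a contradiction.

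The main obstacle is this last step at boundary points, and especially at the corners of the Gordon--Webb--Wolpert polygons. There one needs the heat semigroup with Neumann boundary conditions to be an approximate identity in the uniform or pointwise sense up to $\partial\Omega_i$. My first attempt would be the probabilistic representation $H_i(x,\cdot,s)$ as the transition density of reflected Brownian motion, whose law at time $s$ converges weakly to $\delta_x$ as $s\to0$ for every starting point $x$, corners included. This would give $e^{-\Delta_i s}g(x)\to g(x)$ for continuous $g$. As a fallback, I would avoid the semigroup at boundary points altogether by the following observation. Twin points must already agree on all eigenfunctions, and in particular on the first nonconstant eigenfunctions, combined with local analyticity of $W_i(x,\cdot)$ in the interior. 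If $W_i(x,\cdot)\equiv W_i(y,\cdot)$, then the Gaussian short-time asymptotics $H_i(x,z,t_0)$ behave differently near $z=x$ and $z=y$. Since this argument is for fixed $t_0$, I expect the semigroup route to be cleaner.
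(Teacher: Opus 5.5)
Your proof is correct and has the same skeleton as the paper's: expand $W_i$ in eigenfunctions, use orthonormality to get $\varphi_n(x)=\varphi_n(y)$ for all $n$, then show the eigenfunctions separate points. It differs in two places.

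\emph{Dropping analyticity.} The paper only assumes that $W(x,\cdot)$ and $W(y,\cdot)$ agree on a set of positive measure. It then uses real-analyticity of $H$ in $\Omega^\circ$, with Mityagin's zero-set theorem, to upgrade this to agreement on all of $\Omega^\circ$. You note that the negation of twin-freeness is already a.e.\ agreement, so pairing with $\varphi_n$ works directly and analyticity is unnecessary. The paper's detour does buy the stronger conclusion that $W(x,\cdot)-W(y,\cdot)\neq 0$ a.e.

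\emph{Separating points.} The paper expands a smooth compactly supported test function $f$ and uses uniform convergence of its eigenfunction series (Courant--Hilbert). You instead regularize with $e^{-\Delta_i s}g$, whose uniform convergence for $s>0$ follows from the Mercer/Dini bounds, and let $s\to0$. The paper's route needs essentially only interior information. However, its compactly supported $f$ vanishes on $\partial\Omega_i$, so as written it cannot separate two boundary points. Your route covers every pair in $\overline{\Omega_i}$, at the price of a genuine boundary input: the Neumann semigroup must be an approximate identity at every point of $\overline{\Omega_i}$, corners included.

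This input is needed even in your ``interior-supported $g$'' variant whenever $x$ or $y$ lies on $\partial\Omega_i$, because you must show $e^{-\Delta_i s}g(y)\to 0$ there. So rely on the uniform version. It is available for Lipschitz, in particular polygonal, domains. By Bass--Hsu, the Neumann heat kernel is jointly continuous on $\overline{\Omega_i}$, and reflecting Brownian motion can be started from every point with continuous paths. Alternatively, a Gaussian upper bound $H_i(x,z,s)\le Cs^{-1}e^{-c|x-z|^2/s}$ together with $\int H_i(x,z,s)\,\mathrm d\mu_i(z)=1$ gives $e^{-\Delta_i s}g\to g$ uniformly for continuous $g$. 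Once you cite one of these, you can drop the vague fallback paragraph.
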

\begin{proof}
    Let $\Omega$ be any compact connected domain in $\mathbb R^2$ and let $H(x,y,t_0)$ be the heat kernel for $x,y\in\Omega$ evaluated at some $t_0>0$. Then we have the following pointwise expansion
    \begin{equation*}
        H(x,y,t_0)=\sum_{n=0}^\infty e^{-\lambda_nt_0}\psi_n(x)\psi_n(y),
    \end{equation*}
    where $\psi_n$ is the Laplace eigenfunction associated with the Laplace eigenvalue $\lambda_n$. We can choose $\{\psi_n\}_{n\geq 0}$ such that it forms an orthonormal basis in $L^2(\Omega)$. Moreover, $H(\cdot,\cdot,t_0)$ is real-analytic on the interior of the domain $\Omega^\circ$~\cite[Vol. 2, Appendix 1 to Chapter V, \S 4.2]{courant1989}.
    
    Suppose that $W(x,y)=\frac{1}{K}H(x,y,t_0)$ is not twin-free, for some $K>0$. Then there is some set of positive measure $A\subset\Omega$ such that $W(x,z)=W(y,z)$ for every $z\in A$, where $x\neq y$.
    Since $z\mapsto H(x,z,t_0)-H(y,z,t_0)$ is analytic on $\Omega^\circ$, we find that $H(x,z,t_0)=H(y,z,t_0)$ for any $z\in\Omega^\circ$ by~\cite{mityagin2015}. By orthogonality of the eigenfunctions, this implies that
    \begin{equation*}
        \psi_k(x) = e^{\lambda_k t_0}\int_\Omega H(x,z,t_0)\psi_k(z)\,\mathrm{d}z = e^{\lambda_k t_0}\int_\Omega H(y,z,t_0)\psi_k(z)\,\mathrm{d}z =\psi_k(y),
    \end{equation*}
    for every $k\geq 0$. Now take a smooth, compactly supported test function $f$ with $f(x)\neq f(y)$. By~\cite[Vol. 1, Chapter VI, \S 2]{courant1989}, the expansion $f=\sum_n c_n\psi_n$ converges uniformly, so that we find the contradiction
    \begin{equation*}
        f(x)=\sum_{n\geq 0}c_n\psi_n(x) = \sum_{n\geq 0}c_n\psi_n(y)=f(y),
    \end{equation*}
    hence the claim follows.
\end{proof}
What is left to show is that the two graphons are not isometric. This fact follows from the short-time asymptotics of the heat kernel, given by the celebrated Varadhan's formula for the heat kernel $H$ on a Riemannian manifold $M$
\begin{equation*}
    \lim_{t\to0^+}-4t\log H(x,y,t)=d(x,y)^2,
\end{equation*}
where $d$ is the geodesic distance on $M$~\cite{varadhan1967, hsu2002}. This formula holds both for a Dirichlet and Neumann boundary condition. This is where the non-isospectral drums come in, as it is exactly this metric for which they are not isometric.
\begin{claim}\label{claim:Neu isometric}
    The graphons $(\Omega_i,W_i)$ are not isometric.
\end{claim}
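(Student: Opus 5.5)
Suppose, for contradiction, that $\phi:\Omega_1\to\Omega_2$ is a measure-preserving isometric bijection $(\Omega_1,r_{W_1})\to(\Omega_2,r_{W_2})$ with $W_2(\phi(x),\phi(y))=W_1(x,y)$ for almost all $x,y$. The plan is to upgrade this single-time identity to an identity of heat kernels at \emph{all} times. Varadhan's formula then shows that $\phi$ preserves the intrinsic geodesic distances of the drums, which forces the drums to be congruent and contradicts the Gordon--Webb--Wolpert construction~\cite{GWW}.

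\textbf{Step 1 (intertwining the heat semigroup).} Since $\phi$ is a measure-preserving bijection, $U_\phi f=f\circ\phi^{-1}$ is a unitary map $L^2(\Omega_1)\to L^2(\Omega_2)$. The kernel identity gives $U_\phi T_{W_1}=T_{W_2}U_\phi$. Now $T_{W_i}=\frac1K e^{-\Delta_i t_0}$ is a positive, injective, self-adjoint compact operator, so the Borel functional calculus applies. For every $s>0$ the map $\lambda\mapsto K^{s}\lambda^{s}$ is continuous on the spectrum, and
\begin{equation*}
    e^{-\Delta_i s t_0}=\bigl(K\,T_{W_i}\bigr)^{s}.
\end{equation*}
Functional calculus commutes with unitary conjugation, hence $U_\phi e^{-\Delta_1 t}=e^{-\Delta_2 t}U_\phi$ for all $t>0$. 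Passing to kernels, this gives $H_2(\phi(x),\phi(y),t)=H_1(x,y,t)$ for almost every $(x,y)$, for each fixed $t>0$. Taking $t$ in a countable set (for instance rationals decreasing to $0$) only requires a countable union of null sets.

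\textbf{Step 2 (from almost everywhere to everywhere).} The Neumann heat kernels are continuous on $\Omega_i\times\Omega_i$. By Claim~\ref{claim:Neu twin free} the graphons are twin-free, so $r_{W_i}$ is a genuine metric. The map $(\Omega_i,|\cdot|)\to(\Omega_i,r_{W_i})$ is a continuous bijection from a compact space, hence a homeomorphism. Consequently $\phi$ is a homeomorphism of the drums in their usual topology, and $\mu_i$ has full support. Two continuous functions on $\Omega_1\times\Omega_1$ that agree almost everywhere agree everywhere. Therefore $H_2(\phi(x),\phi(y),t)=H_1(x,y,t)$ for all $x,y$ and all rational $t>0$. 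I expect this step to be the main technical obstacle. It needs continuity of the Neumann heat kernel up to the (polygonal, non-smooth) boundary and the applicability of Varadhan's formula there. To avoid both issues, I would first work only with interior points $x,y\in\Omega_1^\circ$; $\phi$ maps interior to interior because it is a homeomorphism of the closures.

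\textbf{Step 3 (Varadhan and rigidity).} Letting $t\to0^+$ along the rationals in Varadhan's formula yields $d_2(\phi(x),\phi(y))=d_1(x,y)$ for interior points, where $d_i$ is the intrinsic geodesic distance of $\Omega_i$. Every interior point has a small Euclidean ball on which $d_i$ coincides with $|x-y|$. Hence $\phi$ is locally a Euclidean isometry, so locally the restriction of a rigid motion of $\mathbb R^2$. Because $\Omega_1^\circ$ is connected, these local rigid motions agree on overlaps and patch into a single rigid motion $g$ with $\phi=g$ on $\Omega_1^\circ$, and then on $\Omega_1$ by continuity. Thus $\Omega_2=g(\Omega_1)$, i.e.\ the drums are congruent. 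This contradicts~\cite{GWW}, and therefore no such $\phi$ exists.
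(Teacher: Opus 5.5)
Your proposal is correct and follows the same route as the paper: conjugating by the unitary induced by $\phi$ carries the kernel identity from $t_0$ to all times, and Varadhan's formula then forces $\phi$ to preserve the drums' distances, contradicting \cite{GWW}. Two of your steps make rigorous what the paper only sketches. The functional-calculus argument handles eigenvalue multiplicities, and the upgrade from an almost-everywhere identity to a pointwise one, through the Euclidean and $r_{W_i}$ topologies coinciding, shows why $\phi$ gives a genuine isometry; as a bonus, this lets you use Varadhan's formula only locally in the interior.
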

\begin{proof}
    Suppose for a contradiction that there exists a $t_0>0$ such that the two graphons are isometric, i.e., there exists $\phi:\Omega_1\to\Omega_2$ such that $H_1(x,y,t_0)=H_2(\phi(x),\phi(y),t_0)$ almost everywhere. Because $\phi$ is measure-preserving we have $T_{H_1} = U T_{H_2} U^{-1}$, where $U:L^2(\Omega_1)\to L^2(\Omega_2)$ is the push-forward of $\phi$ defined as $Uf(x)= f(\phi(x))$. Since this is a unitary operator, we obtain that $U\psi^{(1)}_n = \psi_n^{(2)}$ maps the eigenvectors $\{\psi^{(1)}_n\}$ of $T_{H_1}$ to $\{\psi^{(2)}_n\}_n$, the eigenvectors of $T_{H_2}$. Hence $H_1(x,y,t)=H_2(\phi(x),\phi(y),t)$ holds almost everywhere for all $t>0$.
    
    To derive the contradiction, note that $\Omega_1$ and $\Omega_2$ are not isometric in the sense that $d_1(x,y)\neq d_2(\phi(x),\phi(y))$ for some $x\neq y$ in $\Omega_1$. So there must exist $S\subset\Omega_1\times\Omega_2$ of positive measure on which 
    \begin{equation*}
        |d_1(x,y)-d_2(\phi(x),\phi(y))|\geq \delta,
    \end{equation*}
    for some $\delta>0$, as $\phi$ cannot be an isometry almost everywhere which would then extend to a global isometry. Now choose $t$ small enough such that both
    \begin{equation*}
        |-4 t\log H_1(x,y,t) - d_1(x,y)^2|<\delta/3,
    \end{equation*}
    and
    \begin{equation*}
        |-4 t\log H_2(\phi(x),\phi(y),t) - d_2(\phi(x),\phi(y))^2|<\delta/3.
    \end{equation*}
    Since this violates $H_1(x,y,t)=H_2(\phi(x),\phi(y),t)$ on $S$, we arrive at a contradiction.
\end{proof}
From these two claims, we can conclude that the graphons are not weakly isomorphic, i.e., $t(F,W_1)\neq t(F,W_2)$ for some graph $F$. Notably, $F$ cannot be a cycle, as cycle densities are spectrally invariant.
\begin{remark}\label{rem:dirch weakly-isom}
    The computations in this section are all valid for heat kernels with Dirichlet boundary conditions, provided that we slightly modify these domains to avoid twin points. Because a Dirichlet boundary condition forces the heat kernel $H_i(x,y,t)=0$ whenever $x$ or $y$ is in $\partial\Omega_i$, all boundary points are indistinguishable with respect to $r_{W_i}=\|W_i(x,\cdot)-W_i(y,\cdot)\|_1$ and form a set of twin points. To resolve this, we define the modified domain $\widetilde \Omega_i \coloneqq(\Omega_i\setminus\partial\Omega_i)\cup\{B_i\}$, which collapses the entire boundary into a single point $B_i$. Consequently, any sequence in the interior converging to $\partial\Omega_i$ naturally converges to $B_i$ under the modified metric induced by $\widetilde W_i$. Using the Dirichlet heat kernels $H_i$, we then define the twin-free graphons $\widetilde W_i:\widetilde\Omega_i\times \widetilde\Omega_i\to[0,1]$ by
    \begin{equation*}
        \widetilde W_i(x,y) = \begin{cases} 
        \frac{1}{K}H_i(x,y,t_0), & \text{if } x, y \ne B_i, \\ 
        0, & \text{otherwise}, 
        \end{cases}
    \end{equation*}
    where $K=\max_{i,x,y}H_i(x,y,t_0)$.
    
    The proof of Claim~\ref{claim:Neu twin free} remains identical for interior points $x,y \in \Omega_i \setminus \partial\Omega_i$. Furthermore, for the boundary point $x=B_i$ and an interior point $y \in \Omega_i \setminus \partial\Omega_i$, we observe that $H_i(y,z,t_0)>0$ for any interior point $z$ (whereas $H_i(B_i,z,t_0) = 0$), which means $\widetilde W_i$ is indeed entirely twin-free. Claim~\ref{claim:Neu isometric} remains unchanged, as one would expect from the classic principle of `not feeling the boundary'.
\end{remark}

\section{Isospectral Dynamics and Forward-Influence}\label{section:isospectral dynamics}
To further demonstrate how these isospectral graphons interact with the stability properties of a steady-state solution in a dynamical system, we analyze synchronized rotating frames in a Kuramoto toy model. We consider the following system whose interactions are determined by a graphon $W$ defined on the probability space $(\Omega,\mu)$ 
\begin{equation*}
    \partial_t\theta(x,t)=\omega(x)+\int_\Omega W(x,y)\sin(\theta(y,t)-\theta(x,t))\,\mathrm d \mu(y),
\end{equation*}
where $\omega$ is the intrinsic frequency distribution of the oscillators. Assuming there exists a synchronized steady-state solution $\theta^*(x,t)=\bar\omega t+\psi(x)$ for some collective frequency $\bar\omega$ and phase profile $\psi$, we consider the perturbed solution
\begin{equation*}
    \theta(x,t) = \bar\omega t + \psi(x) + \varepsilon\eta(x,t),
\end{equation*}
such that we obtain the linearized equation
\begin{equation}\label{eq:linearized KM}
    \partial_t \eta(x,t) = \int_\Omega W(x,y)\cos(\psi(y)-\psi(x))(\eta(y,t) - \eta(x,t))\,\mathrm{d} \mu(y).
\end{equation}

Now suppose we assume identical frequencies $\omega(x)\equiv\bar\omega\in\mathbb R$ and $\psi(y)\equiv\psi(x)\mod 2\pi$. Then the linearized dynamics~\eqref{eq:linearized KM}  simplify to a diffusion operator
\begin{equation*}
    K_W \eta(x)= \int_{\Omega}W(x,y)(\eta(y)-\eta(x))\,\mathrm d\mu(y) = T_W\eta(x) - D_W\eta(x),
\end{equation*}
where $D_W$ is the multiplication operator defined as $D_Wf(x) = d_W(x)f(x)$ with the degree function given by $d_W(x)=\int_{\Omega}W(x,y)\,\mathrm d\mu(y)$. We use this operator to compare the two cases: Neumann-isospectral drums and Dirichlet-isospectral drums.

\subsection{Neumann-isospectral drums}
In the case of either Gordon--Webb--Wolpert drum with the Neumann boundary condition, we observe the following. The heat kernel expansion
\begin{equation*}
    H(x,y,t)=\sum_{n\geq 0}e^{-\lambda_n t}\psi_n(x)\psi_n(y),
\end{equation*}
converges uniformly for $t_0>0$, so we find
\begin{equation*}
    d_{W}(x)=\sum_{n=0}^\infty e^{-\lambda_n t_0}\psi_n(x)\int_{\Omega}\psi_n(y)\,\mathrm{d}\mu(y) = e^{-\lambda_0 t_0}\psi_0(x)\int_\Omega\psi_0(y)\,\mathrm d \mu(y) =  1,
\end{equation*}
as we choose $\{\psi_n\}_{n\geq0}$ to be an orthonormal basis, which makes $\psi_0$ constant on $\Omega$. As a result, the linearized operator becomes $K_W=T_W-I$, and the spectrum is explicitly determined by $T_W$. Hence, for the Neumann-isospectral drums, the steady-states $\theta_i^*(x,t) = \omega_i+\psi_i(x)$ with $\psi_i(y)\equiv\psi_i(x)\mod 2\pi$ also induce isospectral stability operators.

\subsection{Dirichlet-isospectral drums}\label{section:Dirich isospec dynam}
Because the Dirichlet-isospectral drums force the \linebreak graphons to vanish on the boundary, we do not obtain the same expression as in the Neumann case, and we cannot expect the operators $D_{W_i}$ to be isospectral. As a matter of fact, these multiplication operators have continuous spectra, whose support is exactly determined by the range of $d_{W_i}$. Since the continuous part of $K_{W_i}$'s spectrum is solely determined by the continuous spectrum of $D_{W_i}$, it suffices to show that $d_{W_i}$ attain different maxima, as $d_{W_i}(x)=0$ on $\partial \Omega_i$ and positive on the interior of $\Omega_i$.

We simulated $H_i(\cdot,\cdot,t)=e^{-\Delta_i t}$ for the two drums using a finite element method with a maximal area of $10^{-6}$ and determined the distribution of $d_{W_i}(\Omega_i)$. Although there are observable computation errors using this grid size, Figure~\ref{fig:drums heat profile} suggests that this does not explain the difference in the maximal value attained by the $d_{W_i}$. If the drums attain different maximal values, then the support of the continuous spectrum of $D_{W_i}$ are different, hence that of $K_{W_i}$ as well.
\begin{figure}[h!]
    \centering
    \begin{overpic}[width=0.9\linewidth, percent]{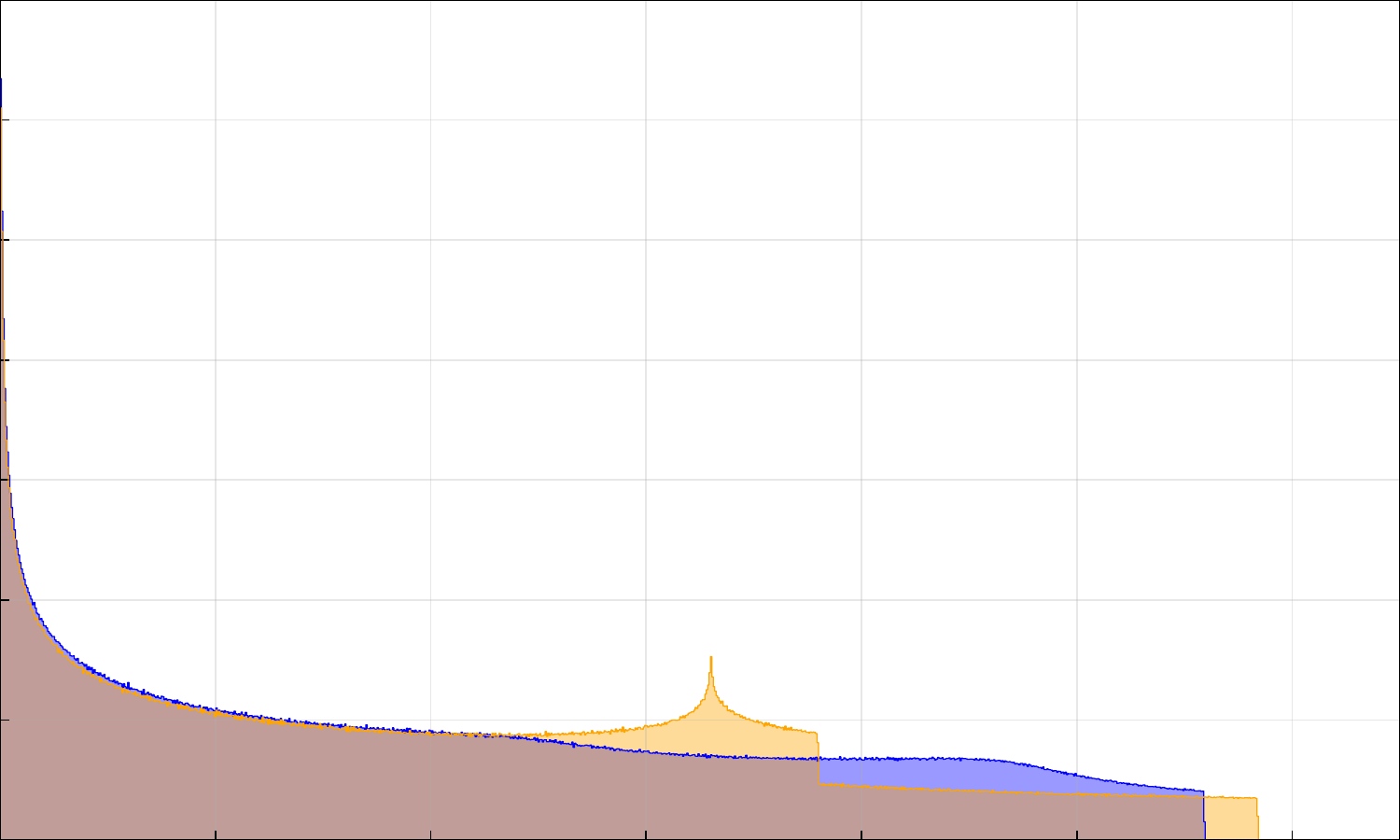}

        % --- X-AXIS TICKS ---
        \put(0.00, -2.5){\makebox(0,0)[t]{$0$}}
        \put(15.38, -2.5){\makebox(0,0)[t]{$0.1$}}
        \put(30.77, -2.5){\makebox(0,0)[t]{$0.2$}}
        \put(46.15, -2.5){\makebox(0,0)[t]{$0.3$}}
        \put(61.54, -2.5){\makebox(0,0)[t]{$0.4$}}
        \put(76.92, -2.5){\makebox(0,0)[t]{$0.5$}}
        \put(92.31, -2.5){\makebox(0,0)[t]{$0.6$}}

        % --- Y-AXIS TICKS ---
        \put(-1.5, 8.57){\makebox(0,0)[r]{\raisebox{-0.7ex}{$2$}}}
        \put(-1.5, 17.14){\makebox(0,0)[r]{\raisebox{-0.7ex}{$4$}}}
        \put(-1.5, 25.71){\makebox(0,0)[r]{\raisebox{-0.7ex}{$6$}}}
        \put(-1.5, 34.29){\makebox(0,0)[r]{\raisebox{-0.7ex}{$8$}}}
        \put(-1.5, 42.86){\makebox(0,0)[r]{\raisebox{-0.7ex}{$10$}}}
        \put(-1.5, 51.43){\makebox(0,0)[r]{\raisebox{-0.7ex}{$12$}}}

        % --- LABELS & LEGEND ---
        \put(50, -10.5){\makebox(0,0)[t]{Value of $d_W(x)$}}
        \put(-9, 30){\makebox(0,0)[b]{\rotatebox{90}{Density}}}
        \put(80, 54){\textcolor{blue!40}{\rule{8pt}{8pt}}\; Drum 1}
        \put(80, 47){\textcolor{orange!40}{\rule{8pt}{8pt}}\; Drum 2}
    \end{overpic}
    \vspace{4em}
    \caption{Distribution of $d_{W_i}(\Omega_i)$ for the two Dirichlet-isospectral drums.}
    \label{fig:drums heat profile}
\end{figure}
We show that this is indeed the case by using the short-time asymptotics of the heat kernel, which is given by Varadhan's formula. 

For these two isospectral drums, it can be shown that $R_1<R_2$, where $R_1$ and $R_2$ are the radii of the largest inscribed circles of $\Omega_1$ and $\Omega_2$, respectively. Let $x_i$ denote the center of the respective circle, and let $B_{R_i}\coloneqq B_{R_i}(x_i)\subset\Omega_i$ be the disk contained within the drum. Note that $d_{W_i}(x)\equiv w_i(x,t_0)$ as a function of $x\in\Omega_i$, where $w_i(x,t)=\int_{\Omega_i} H_i(x,y,t)\,\mathrm d\mu(y)$ is the solution to the heat equation
\begin{equation*}
    \begin{cases}
        u_t(x,t) = \Delta u(x,t), & (x,t) \in \Omega_i \times (0,\infty), \\
        u(x,t) = 0, & (x,t) \in \partial\Omega_i \times (0,\infty), \\
        u(x,0) = 1, & x \in \Omega_i^\circ.
    \end{cases}
\end{equation*}

Since $L=\Delta$ is also the generator for a Brownian motion $X_t$ with $\mathbb E X_t=0$ and $\text{Var} (X_t)=2t$, we find $1-w_i(x,t) = \mathbb P_x(\tau_{D}\leq t)$, where $\tau_{D}$ is the time $X_t$ exits $D\subset\Omega_i$ when started at $x\in D$. Due to~\cite{hsu2002} for any $r<R_i$ we find the following  
\begin{equation*}
    \mathbb P_x(\tau_{B_r(x)}\leq t) \sim c_r(x) \exp\Big(-\frac{r^2}{4t}\Big),
\end{equation*}
such that, for small enough $t$, we have
\begin{equation*}
    \mathbb P_{x_2}(\tau_{\Omega_2}\leq t)\leq \mathbb P_{x_2}(\tau_{B_{R_2}}\leq t)\leq C_2\exp\Big(-\frac{R_2^2}{4t}\Big).
\end{equation*}
Similarly, on $\Omega_1$ we have
\begin{equation*}
    \mathbb P_x(\tau_{\Omega_1}\leq t) \geq C_1 \exp\Big(-\frac{d(x,\partial \Omega_1)^2}{4t}\Big) \geq C_1 \exp\Big(-\frac{R_1^2}{4t}\Big)
\end{equation*}
for any $x\in\Omega_1$. Thus there exists $t>0$ small enough such that for all $x\in\Omega_1$ we have $w_1(x,t)< w_2(x_2,t)$, as $R_1<R_2$. Hence for the appropriate $t_0$ we obtain differing continuous spectra for $K_{W_1}$ and $K_{W_2}$.

\bibliography{bibliography}
\bibliographystyle{amsplain}

\end{document}